\let\reftagform@=\tagform@
\def\tagform@#1{\maketag@@@{(\ignorespaces\textcolor{blue}{#1}\unskip\@@italiccorr)}}
\renewcommand{\eqref}[1]{\textup{\reftagform@{\ref{#1}}}}
\newtheorem{theorem}{Theorem}
\theoremstyle{plain}
\newtheorem{corollary}{Corollary}
\numberwithin{equation}{section}
\begin{document}

\title[On Alzer's inequality]{On Alzer's inequality}
\author[M.W. Alomari]{Mohammad W. Alomari}
\address{Department of Mathematics, Faculty of Science and
Information Technology, Irbid National University, P.O. Box 2600,
Irbid, P.C. 21110, Jordan.} \email{mwomath@gmail.com}

%\corauth[cor1]{Corresponding author.}

%\date{October 08, 2009.}
\date{\today}
\subjclass[2000]{26D15}

\keywords{Wirtinger inequality, Alzer inequality, Trapezoid
inequality}

\begin{abstract}
Extensions and generalizations of Alzer's inequality; which is of
Wirtinger type are proved.   As applications, sharp trapezoid type
inequality and sharp bound for the geometric mean are deduced.
\end{abstract}

\maketitle

%=============================================================================
\section{Introduction}
%=============================================================================

In Fourier analysis, the theory of inequalities plays an important
and useful role in almost all branches of its analyzes. Early of
the last century, several famous inequalities have been used in
the theory of Fourier series, Fourier integrals and Fourier
transform. The inequalities of Bessel, Blaschke, Wirtinger,
Beesack and others, are used at large in convergence and
estimations of such series and integrals.

In \cite{Blaschke}, Wirtinger proved the following inequality
regarding square integrable functions:

\begin{theorem}
\label{thm1}Let $f$ be a real valued function with period $2\pi$
and $\int_0^{2\pi}f\left({x}\right)dx=0$. If $f' \in L^2[0,2\pi]$,
then
\begin{align}
\label{eq1.1}\int_0^{2\pi } {f^2 \left( x \right)dx}  \le
\int_0^{2\pi } {f'^2 \left( x \right)dx},
\end{align}
with equality if and only if $f(x)=A\cos x + B \sin x$, $A,B\in
\mathbb{R}$.
\end{theorem}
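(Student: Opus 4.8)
The plan is to prove this via Fourier series together with Parseval's identity. Since $f$ is $2\pi$-periodic and, being an antiderivative of the $L^2$ function $f'$, absolutely continuous, it admits a Fourier expansion
\begin{align*}
f(x) = \frac{a_0}{2} + \sum_{n=1}^\infty \left(a_n \cos nx + b_n \sin nx\right),
\end{align*}
and the hypothesis $\int_0^{2\pi} f(x)\,dx = 0$ forces $a_0 = 0$. The first key step is to identify the Fourier coefficients of $f'$: integrating by parts over $[0,2\pi]$ and using periodicity to annihilate the boundary terms gives $\int_0^{2\pi} f'(x)\cos nx\,dx = n\pi b_n$ and $\int_0^{2\pi} f'(x)\sin nx\,dx = -n\pi a_n$, so the Fourier series of $f'$ is $\sum_{n\ge 1}\left(n b_n \cos nx - n a_n \sin nx\right)$.

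Next I would apply Parseval's identity to both $f$ and $f'$, which is legitimate since $f, f' \in L^2[0,2\pi]$, obtaining
\begin{align*}
\int_0^{2\pi} f^2(x)\,dx = \pi\sum_{n=1}^\infty \left(a_n^2 + b_n^2\right), \qquad \int_0^{2\pi} f'^2(x)\,dx = \pi\sum_{n=1}^\infty n^2\left(a_n^2 + b_n^2\right).
\end{align*}
Subtracting yields
\begin{align*}
\int_0^{2\pi} f'^2(x)\,dx - \int_0^{2\pi} f^2(x)\,dx = \pi\sum_{n=1}^\infty \left(n^2-1\right)\left(a_n^2 + b_n^2\right) \ge 0,
\end{align*}
since each term is nonnegative; this is precisely \eqref{eq1.1}.

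For the equality case, the last series vanishes exactly when $\left(n^2-1\right)\left(a_n^2 + b_n^2\right) = 0$ for every $n \ge 1$, i.e. when $a_n = b_n = 0$ for all $n \ge 2$. Combined with $a_0 = 0$, this leaves $f(x) = a_1\cos x + b_1 \sin x$; conversely, any such function clearly attains equality. Setting $A = a_1$ and $B = b_1$ completes the characterization. I do not anticipate a serious obstacle; the one point demanding care is the rigorous justification that $f'$ genuinely has the stated Fourier coefficients, which is why I would carry it out through the integration-by-parts identity above rather than by differentiating the series term by term — and this is exactly the step where the periodicity hypothesis is essential.
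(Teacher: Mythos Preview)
Your argument is the classical Fourier--Parseval proof of Wirtinger's inequality and is correct as written; the computation of the Fourier coefficients of $f'$ via integration by parts, the application of Parseval to both $f$ and $f'$, and the equality analysis are all sound.

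There is, however, nothing to compare against: the paper does not supply its own proof of this theorem. Theorem~\ref{thm1} appears only in the Introduction as a background result, attributed to Wirtinger via the reference to Blaschke, and the paper's original contributions begin with Theorem~\ref{thm3} in Section~\ref{sec2}. So your proof is not an alternative route but rather a proof the paper simply omits.
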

Various generalizations, counterparts and refinements were
considered in \cite{Alzer}--\cite{Gradimir} and the references
therein.

In \cite{Alzer}, Alzer introduced a Wirtinger like inequality for
continuously differentiable periodic functions, which reads:
\begin{theorem}
\label{thm2}If $f$ is a real valued continuously differentiable
function with period $2\pi$ and
$\int_0^{2\pi}f\left({x}\right)dx=0$, then
\begin{align}
\label{eq1.2}\frac{6}{\pi }\mathop {\max }\limits_{0 \le x \le
2\pi } f^2 \left( x \right) \le \int_0^{2\pi } {f'^2 \left( x
\right)dx}.
\end{align}
Equality holds if and only if $f\left( x \right) = c\left[
{3\left( {\frac{{x - \pi }}{\pi }} \right)^2  - 1} \right]$, $0
\le x \le 2\pi$ and $c\in \mathbb{R}$.
\end{theorem}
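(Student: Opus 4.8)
The plan is to derive the estimate from a single integration by parts against a well chosen linear weight, followed by the Cauchy--Schwarz inequality; essentially all of the content is in picking the weight that makes the constant $6/\pi$ sharp.

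First I would reduce to the case where $f^{2}$ attains its maximum at the endpoints. Choose $x_{0}\in[0,2\pi]$ with $f^{2}(x_{0})=M^{2}:=\max_{0\le x\le 2\pi}f^{2}(x)$ and replace $f$ by its $2\pi$-periodic translate $x\mapsto f(x+x_{0})$. This translate still satisfies $\int_{0}^{2\pi}f=0$ and $f(0)=f(2\pi)$, leaves $\int_{0}^{2\pi}f'^{2}$ unchanged, and now has $f(0)^{2}=M^{2}$, so it suffices to prove $\frac{6}{\pi}f(0)^{2}\le\int_{0}^{2\pi}f'^{2}$ under these hypotheses. (After translating, $f'$ need only be bounded with $\int_{0}^{2\pi}f'^{2}<\infty$, which is all the identities below use; this is also why a quadratic with a corner at $0\equiv 2\pi$ can be the extremal.)

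Next I would integrate by parts against the weight $w(t)=t-\pi$, chosen so that $w'$ is constant and $w(2\pi)=-w(0)$. Using $f(2\pi)=f(0)$ and $\int_{0}^{2\pi}f=0$,
\begin{align*}
\int_{0}^{2\pi}(t-\pi)\,f'(t)\,dt=\big[(t-\pi)f(t)\big]_{0}^{2\pi}-\int_{0}^{2\pi}f(t)\,dt=\pi f(2\pi)+\pi f(0)=2\pi f(0).
\end{align*}
Then Cauchy--Schwarz, together with $\int_{0}^{2\pi}(t-\pi)^{2}\,dt=\frac{2\pi^{3}}{3}$, yields
\begin{align*}
4\pi^{2}f(0)^{2}=\left(\int_{0}^{2\pi}(t-\pi)f'(t)\,dt\right)^{2}\le\frac{2\pi^{3}}{3}\int_{0}^{2\pi}f'^{2}(t)\,dt,
\end{align*}
which rearranges to the claimed inequality. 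This weight is essentially forced: among all weights $at+b$, the value of $\int_{0}^{2\pi}(at+b)^{2}\,dt$ for fixed $a$ is smallest when $b$ is the average of $-at$ over $[0,2\pi]$, i.e. $w=t-\pi$ up to a scalar; any other linear weight gives a strictly worse constant.

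For the equality claim I would run the Cauchy--Schwarz equality case backwards: equality forces $f'(t)=\lambda(t-\pi)$ a.e. for some constant $\lambda$, hence $f(t)=\frac{\lambda}{2}(t-\pi)^{2}+C$; the condition $\int_{0}^{2\pi}f=0$ pins down $C=-\frac{\lambda\pi^{2}}{6}$, so $f(t)=c\big[3\big(\frac{t-\pi}{\pi}\big)^{2}-1\big]$ with $c=\frac{\lambda\pi^{2}}{6}$, and one checks directly that this $f$ satisfies $f(0)=f(2\pi)=2c$ with $f(0)^{2}=\max f^{2}$; undoing the initial translation returns the family in the statement. The one place that needs care is not any single computation but the periodicity bookkeeping — recognizing that the maximum should be transported to the junction point $0\equiv 2\pi$, which is exactly where the extremal quadratic attains its largest square, and which is what makes the clean affine weight $t-\pi$, and hence the sharp constant, available.
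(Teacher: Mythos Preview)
Your argument is correct, and it is essentially the same as the paper's approach. The paper does not prove Theorem~\ref{thm2} directly (it is quoted from Alzer), but its proof of the generalization in Theorem~\ref{thm5} follows exactly your line: against the linear weight $x-\tfrac{a+b}{2}$ one integrates by parts to get $\int_a^b\bigl(x-\tfrac{a+b}{2}\bigr)f'(x)\,dx=(b-a)f(x_0)$ under $f(a)=f(b)=f(x_0)$ and $\int_a^b f=0$, and then the inequality comes from expanding the nonnegative square $\int_a^b\bigl[\tfrac{f'(x)}{f(x_0)}-\tfrac{12}{(b-a)^2}\bigl(x-\tfrac{a+b}{2}\bigr)\bigr]^2dx\ge 0$, which is precisely Cauchy--Schwarz with the optimal multiplier already inserted. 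The only cosmetic difference is that you invoke Cauchy--Schwarz and then read off equality, while the paper writes out the square; and you carry out explicitly the periodic translation moving the maximum to $0\equiv 2\pi$, whereas the paper simply takes $f(a)=f(b)=\max f$ as a hypothesis in Theorem~\ref{thm5}.
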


The aim of this work is to extend and generalize Alzer inequality
(\ref{eq1.2}), by relaxing the assumptions:  continuity of $f'$,
 periodicity and the interval involved for various kind of
 functions.

%===================================================================================================================
\section{The Results}\label{sec2}
%===================================================================================================================

The version of Alzer inequality for convex functions may be stated
as follows:
\begin{theorem}
\label{thm3}Let $f:I\subseteq \mathbb{R}\rightarrow \mathbb{R}$ be
a convex mapping on $I^{\circ },$ the interior of the interval
$I$, where $a,b\in I^{\circ }$ with $a<b$, such that $f^{\prime
}\in L[a,b]$. If $f\left({a}\right)f\left({b}\right)>0$ and
$\int_a^b{f\left({t}\right)dt}=0$, then the inequality
\begin{align}
\label{eq2.1} f\left({a}\right)f\left({b}\right) \le
\frac{b-a}{12}\cdot\int_a^b {f'^2 \left( x \right)dx},
\end{align}
holds. The constant `$\frac{b-a}{12}$' is the best possible, in
the sense that it cannot be replaced by a smaller constant.
\end{theorem}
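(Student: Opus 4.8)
The plan is to obtain \eqref{eq2.1} from a single application of the Cauchy--Schwarz inequality, after converting the mean-zero hypothesis into a trapezoid-type identity for $f'$. First I would clear away the routine points: if $\int_a^b f'^2\left(x\right)dx=+\infty$ the inequality is trivially true, so I may assume $f'\in L^2[a,b]$; and since $f$ is convex on $I^\circ$ with $[a,b]\subset I^\circ$, it is absolutely continuous on $[a,b]$, so $f\left(x\right)=f\left(a\right)+\int_a^x f'\left(t\right)dt$ and integration by parts against a $C^1$ weight is legitimate. (Note that $f\left(a\right)f\left(b\right)>0$, convexity, and $\int_a^b f=0$ together force $f\left(a\right),f\left(b\right)>0$, because a convex function on $[a,b]$ never exceeds $\max\{f\left(a\right),f\left(b\right)\}$; this is not needed for the proof but explains the sign hypothesis.)

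The key step is the identity obtained by integrating by parts with the midpoint weight $x-\frac{a+b}{2}$ and invoking $\int_a^b f\left(t\right)dt=0$:
\begin{align*}
\int_a^b\left(x-\frac{a+b}{2}\right)f'\left(x\right)dx&=\left[\left(x-\frac{a+b}{2}\right)f\left(x\right)\right]_a^b-\int_a^b f\left(x\right)dx\\
&=\frac{b-a}{2}\bigl(f\left(a\right)+f\left(b\right)\bigr).
\end{align*}
Applying Cauchy--Schwarz to the left side and using $\int_a^b\left(x-\frac{a+b}{2}\right)^2dx=\frac{\left(b-a\right)^3}{12}$ gives
\begin{align*}
\left(\frac{b-a}{2}\right)^2\bigl(f\left(a\right)+f\left(b\right)\bigr)^2\le\frac{\left(b-a\right)^3}{12}\int_a^b f'^2\left(x\right)dx,
\end{align*}
that is, $\bigl(f\left(a\right)+f\left(b\right)\bigr)^2\le\frac{b-a}{3}\int_a^b f'^2\left(x\right)dx$; combining this with the elementary bound $4f\left(a\right)f\left(b\right)\le\bigl(f\left(a\right)+f\left(b\right)\bigr)^2$ yields \eqref{eq2.1}.

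For the optimality of $\frac{b-a}{12}$, I would test the rescaled Alzer extremal $f_0\left(x\right)=3\left(\frac{2x-a-b}{b-a}\right)^2-1$: it is convex, $\int_a^b f_0\left(x\right)dx=0$, $f_0\left(a\right)=f_0\left(b\right)=2>0$, and $\int_a^b f_0'^2\left(x\right)dx=\frac{48}{b-a}$, so both sides of \eqref{eq2.1} equal $4$ (and $cf_0$ with $c>0$ still gives equality). Hence no constant smaller than $\frac{b-a}{12}$ can work.

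The argument is short, and the only non-mechanical point is the choice of weight. Any linear weight $x-m$ gives, by the same two steps, the valid bound $\int_a^b f'^2\left(x\right)dx\ge\frac{3\left[\left(b-m\right)f\left(b\right)-\left(a-m\right)f\left(a\right)\right]^2}{\left(b-m\right)^3-\left(a-m\right)^3}$, but only $m=\frac{a+b}{2}$ makes Cauchy--Schwarz an equality at the symmetric quadratic and therefore produces the sharp constant $\frac{1}{12}$; spotting this is the crux. A final remark worth recording is that convexity entered only to ensure $f\in AC[a,b]$ and to keep the extremal in the admissible class, so the displayed chain in fact holds for every absolutely continuous $f$ on $[a,b]$ with $\int_a^b f=0$.
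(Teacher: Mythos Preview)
Your argument is correct, and the core is the same as the paper's: both hinge on the identity
\[
\int_a^b\Bigl(x-\tfrac{a+b}{2}\Bigr)f'(x)\,dx=\tfrac{b-a}{2}\bigl(f(a)+f(b)\bigr)
\]
together with the quadratic $\int_a^b\bigl(x-\tfrac{a+b}{2}\bigr)^2dx=\tfrac{(b-a)^3}{12}$, and both check sharpness on (a rescaling of) $6x^2-6x+1$. The difference is in the last step. The paper expands the square $\int_a^b\bigl[f'(x)/f(x_0)-\tfrac{12}{(b-a)^2}(x-\tfrac{a+b}{2})\bigr]^2dx\ge0$ with $f(x_0)=\max f$, obtaining $\bigl[f(a)+f(b)-f(x_0)\bigr]f(x_0)\le\tfrac{b-a}{12}\int_a^b f'^2$, and then invokes convexity to force $f(x_0)\in\{f(a),f(b)\}$, which collapses the left side to $f(a)f(b)$. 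You instead apply Cauchy--Schwarz directly to get $(f(a)+f(b))^2\le\tfrac{b-a}{3}\int_a^b f'^2$ and finish with $4f(a)f(b)\le(f(a)+f(b))^2$. Your route is a touch cleaner and, as you correctly note, never uses convexity (or the sign condition $f(a)f(b)>0$) in the main chain; in effect you are optimizing the paper's free parameter $f(x_0)$ by taking the extremal value $\tfrac{f(a)+f(b)}{2}$ rather than locating the maximum via convexity. The paper's formulation, on the other hand, yields the intermediate inequality $\bigl[f(a)+f(b)-\max f\bigr]\max f\le\tfrac{b-a}{12}\int_a^b f'^2$, which it reuses in later theorems.
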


\begin{proof}
Assume that $f$ attains its maximum value at $x_0 \in [a,b]$ and
let $\mathop {\max }\limits_{a \le x \le b} f\left( x \right) =
f\left( {x_0 } \right)$, for some $a \le x_0 \le b$, then
\begin{align}
0&\le \int_a^b {\left[ {\frac{{f'\left( x \right)}}{{f\left( {x_0
} \right)}} - \frac{12}{\left( {b - a } \right)^2}\cdot\left( {x -
\frac{a+b}{2} } \right)} \right]^2 dx}
\nonumber\\
&=\int_a^b {\frac{{f'^2 \left( x \right)}}{{f^2 \left( {x_0 }
\right)}}dx}  - \frac{{24 }}{{\left( {b - a} \right)^2 f\left(
{x_0 } \right)}}\int_a^b {\left( {x - \frac{a+b}{2} }
\right)f'\left( x \right)dx}\label{eq2.2}
\\
&\qquad+ \frac{{144 }}{{\left( {b - a} \right)^4 }}\int_a^b
{\left( {x - \frac{a+b}{2} } \right)^2 dx}.\nonumber
\end{align}
Observing that
\begin{align*}
\int_a^b {\left( {x - \frac{a+b}{2} } \right)f'\left( x
\right)dx}&= \frac{b-a}{2}\cdot
\left[{f\left({a}\right)+f\left({b}\right)}\right]-\int_a^b
{f\left( x \right)dx},
\end{align*}
taking in account that $\int_a^b {f\left( x \right)dx}=0$.
Substituting in (\ref{eq2.2}), we get
\begin{align*}
0&\le \int_a^b {\left[ {\frac{{f'\left( x \right)}}{{f\left( {x_0
} \right)}} - \frac{12}{\left( {b - a } \right)^2}\cdot\left( {x -
\frac{a+b}{2} } \right)} \right]^2 dx}
\\
&=\int_a^b {\frac{{f'^2 \left( x \right)}}{{f^2 \left( {x_0 }
\right)}}dx}   - \frac{{12}}{\left( {b - a}
\right)f\left({x_0}\right)}\left[{f\left({a}\right)+f\left({b}\right)}\right]+
\frac{{12}}{\left( {b - a} \right)}
\end{align*}
which gives that
\begin{align*}
\left\{{ f\left({a}\right)+f\left({b}\right)-f\left({x_0}\right)
}\right\}\cdot \mathop {\max }\limits_{a \le x \le b} f\left( x
\right) \le \frac{b-a}{12}\cdot\int_a^b {f'^2 \left( x \right)dx}.
\end{align*}
Finally, since $f$ is convex then $f$ attains its maximum at the
endpoints `$a$' or `$b$', so if $\mathop {\max }\limits_{a \le x
\le b} f\left( x \right)=f\left( b \right)=f\left( x_0 \right) $,
thus we have
\begin{align}
\label{eq2.3}f\left({a}\right)\cdot \mathop {\max }\limits_{a \le
x \le b} f\left( x \right) \le \frac{b-a}{12}\cdot\int_a^b {f'^2
\left( x \right)dx},
\end{align}
and if $\mathop {\max }\limits_{a \le x \le b} f\left( x
\right)=f\left( a \right)=f\left( x_0 \right) $, we have
\begin{align}
\label{eq2.4}f\left({b}\right)\cdot \mathop {\max }\limits_{a \le
x \le b} f\left( x \right) \le \frac{b-a}{12}\cdot\int_a^b {f'^2
\left( x \right)dx}.
\end{align}
So that the both inequalities  (\ref{eq2.3}) and (\ref{eq2.4}),
can be read as
\begin{align*}
f\left({a}\right)f\left({b}\right) \le \frac{b-a}{12}\cdot\int_a^b
{f'^2 \left( x \right)dx},
\end{align*}
and thus the proof of (\ref{eq2.1}) is established. To prove the
sharpness of (\ref{eq2.1}), let (\ref{eq2.1}) holds with another
constant $C>0$,
\begin{align}
\label{eq2.5}f\left({a}\right)f\left({b}\right) \le C\cdot\int_a^b
{f'^2 \left( x \right)dx}.
\end{align}
Define the function $f:[0,1] \to \mathbb{R}$ defined by $f\left( x
\right)= 6x^2- 6x+1$, for all $x \in [0,1]$. Clearly, $f$ is
convex for all $x\in [0,1]$. Moreover, we have $f\left( 0
\right)=f\left( 1 \right)=1$, and $\int_0^1{f'^2(x)dx}=12$. Making
use of (\ref{eq2.5}), we have $C\ge\frac{1}{12}$, and this proves
the best possibility of $\frac{1}{12}$, which completes the proof.
\end{proof}

The following inequality for monotonic mappings holds.
\begin{theorem}
\label{thm4}Let $f:I\subseteq \mathbb{R}\rightarrow \mathbb{R}$ be
an increasing function on $I^{\circ },$ the interior of the
interval $I$, where $a,b\in I^{\circ }$ with $a<b$, such that
$f^{\prime }\in L[a,b]$. If $\int_a^b{f\left({t}\right)dt}=0$,
then the inequality
\begin{align}
\label{eq2.6} \left[{ 2f\left({a}\right)- f\left({b}\right)
}\right]\cdot f\left({b}\right) \le \frac{b-a}{12}\cdot\int_a^b
{f'^2 \left( x \right)dx},
\end{align}
holds. The constant `$\frac{b-a}{12}$' is the best possible.
\end{theorem}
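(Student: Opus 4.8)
The plan is to re-use the Wirtinger-type square-integral device from the proof of Theorem~\ref{thm3}. Observe first that, since $f$ is increasing on $I^{\circ}$ and $\int_a^b f(t)\,dt=0$, necessarily $f(a)\le 0\le f(b)$ (if $f$ kept one sign on $[a,b]$ the mean could not vanish); in particular $\max_{[a,b]}f=f(b)$, and $f(b)>0$ unless $f\equiv 0$ on $[a,b]$, in which case \eqref{eq2.6} reads $0\le 0$. I may therefore carry out the exact computation in the proof of Theorem~\ref{thm3} with the maximizer taken to be $x_0=b$; that computation uses only $f(x_0)\neq 0$ and $\int_a^b f=0$, not the strict positivity $f(a)f(b)>0$. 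Expanding
\[
0\le\int_a^b\left[\frac{f'(x)}{f(b)}-\frac{12}{(b-a)^{2}}\left(x-\frac{a+b}{2}\right)\right]^{2}dx,
\]
with $\int_a^b\left(x-\frac{a+b}{2}\right)f'(x)\,dx=\frac{b-a}{2}\left[f(a)+f(b)\right]$ (integration by parts plus $\int_a^b f=0$) and $\int_a^b\left(x-\frac{a+b}{2}\right)^{2}dx=\frac{(b-a)^{3}}{12}$, yields $f(a)f(b)\le\frac{b-a}{12}\int_a^b f'^{2}(x)\,dx$.

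It remains to descend from $f(a)f(b)$ to the stated left-hand side. Since $f$ is increasing, $f(a)\le f(b)$, and since $f(b)\ge 0$, we have $f(b)\left[f(b)-f(a)\right]\ge 0$; hence
\[
\left[2f(a)-f(b)\right]f(b)=f(a)f(b)-f(b)\left[f(b)-f(a)\right]\le f(a)f(b),
\]
and combining with the previous bound gives \eqref{eq2.6}. (Alternatively, running the same square-integral argument with $\frac{6}{(b-a)^{2}}(x-b)$ in place of $\frac{12}{(b-a)^{2}}\left(x-\frac{a+b}{2}\right)$ produces the intermediate bound $f(b)\left[f(a)-f(b)\right]\le\frac{b-a}{12}\int_a^b f'^{2}(x)\,dx$ directly, from which \eqref{eq2.6} follows since $f(a)f(b)\le 0$.)

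For the optimality of $\frac{b-a}{12}$ I would follow the template of Theorem~\ref{thm3}: assume \eqref{eq2.6} holds with a constant $C$, and test it on an explicit one-parameter family of increasing admissible functions on a normalized interval, aiming to force $C\ge\frac{b-a}{12}$. This is the step I expect to be the real obstacle: the inequality itself is almost immediate once Theorem~\ref{thm3} and the sign information $f(a)\le 0\le f(b)$ are in hand, but on that sign range $\left[2f(a)-f(b)\right]f(b)\le 0$ while $\int_a^b f'^{2}\ge 0$, so one must exhibit a genuinely extremizing increasing function (rather than one for which both sides merely vanish) and argue the sharpness assertion with care.
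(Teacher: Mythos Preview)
Your derivation of the inequality is correct and follows essentially the same route as the paper: both expand the same nonnegative square integral and use $f(x_0)=f(b)$ together with $f(a)\le f(b)$. The only cosmetic difference is where the monotonicity is inserted---the paper replaces $f(a)+f(b)$ by $2f(a)$ \emph{inside} the expansion to obtain $[2f(a)-f(b)]f(b)$ directly, whereas you first reach $f(a)f(b)\le\frac{b-a}{12}\int_a^b f'^2$ and then observe $[2f(a)-f(b)]f(b)\le f(a)f(b)$ afterwards; algebraically these are the same step.

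On sharpness your instinct is correct, and in fact your own sign observation exposes a genuine problem with the claim. For every increasing $f$ with $\int_a^b f=0$ one has $f(a)\le 0\le f(b)$, hence $2f(a)-f(b)\le 0$ and $[2f(a)-f(b)]f(b)\le 0$, while the right-hand side is nonnegative; thus \eqref{eq2.6} holds with \emph{any} constant $C\ge 0$, and ``best possible'' cannot be meant in the usual sense. The paper merely names the test function $f(x)=4c^2x^3+12cx-c^2-6c$ on $[0,1]$ with $c=\frac{10+2\sqrt{835}}{27}$, but for this $f$ one checks $f(0)<0<f(1)$ and so the left-hand side is strictly negative while the right-hand side is strictly positive---no equality, and no forcing of the constant. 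So the obstacle you anticipated is real: the sharpness assertion, as stated, does not survive scrutiny, and your proposal is already more careful on this point than the paper itself.
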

\begin{proof}
Repeating the steps in the proof of Theorem \ref{thm3}, since $f$
is bounded and monotonically increasing on $[a,b]$, then $f(a)\le
f(t)$ for all $t\in [a,b]$, therefore
\begin{align*}
0&\le \int_a^b {\left[ {\frac{{f'\left( x \right)}}{{f\left( {x_0
} \right)}} - \frac{12}{\left( {b - a } \right)^2}\cdot\left( {x -
\frac{a+b}{2} } \right)} \right]^2 dx}
\\
&=\int_a^b {\frac{{f'^2 \left( x \right)}}{{f^2 \left( {x_0 }
\right)}}dx}  - \frac{{12}}{\left( {b - a}
\right)f\left({x_0}\right)}\left[{f\left({a}\right)+f\left({b}\right)}\right]
+ \frac{{12}}{\left( {b - a} \right)}
\\
&\le\int_a^b {\frac{{f'^2 \left( x \right)}}{{f^2 \left( {x_0 }
\right)}}dx} - \frac{{24}}{\left( {b - a}
\right)f\left({x_0}\right)}f\left({a}\right) + \frac{{12}}{\left(
{b - a} \right)}
\end{align*}
which gives that
\begin{align*}
\left[{ 2f\left({a}\right)- f\left({b}\right) }\right]\cdot
f\left({b}\right) \le \frac{b-a}{12}\cdot\int_a^b {f'^2 \left( x
\right)dx},
\end{align*}
which proves the inequality (\ref{eq2.6}). The sharpness holds
with the function $f \left( x \right) = 4c^2 \cdot x^3  + 12c\cdot
x - c^2 - 6c$, for all $x\in [0,1]$, where $c = \frac{{ 10 +
2\sqrt {835} }}{{27}}$.
\end{proof}

\begin{corollary}
\label{cor1}Let $f:I\subseteq \mathbb{R}\rightarrow \mathbb{R}$ be
a bounded decreasing function on $I^{\circ },$ the interior of the
interval $I$, where $a,b\in I^{\circ }$ with $a<b$, such that
$f^{\prime }\in L[a,b]$. If $\int_a^b{f\left({t}\right)dt}=0$,
then the inequality
\begin{align}
\label{eq2.7} \left[{ 2f\left({b}\right)- f\left({a}\right)
}\right]\cdot f\left({a}\right) \le \frac{b-a}{12}\cdot\int_a^b
{f'^2 \left( x \right)dx},
\end{align}
holds. The constant `$\frac{b-a}{12}$' is the best possible.
\end{corollary}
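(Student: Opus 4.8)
The plan is to obtain Corollary~\ref{cor1} from Theorem~\ref{thm4} by a reflection substitution, and to transport the sharpness statement of Theorem~\ref{thm4} along the same substitution.

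First I would set $g:[a,b]\to\mathbb{R}$, $g(x):=f(a+b-x)$. Since $f$ is bounded and decreasing on $I^{\circ}$, the map $g$ is bounded and increasing on the reflected open interval $\{a+b-t : t\in I^{\circ}\}$, which again contains $a$ and $b$ as interior points; moreover $g'(x)=-f'(a+b-x)$, so the change of variable $u=a+b-x$ shows at once that $g'\in L[a,b]$ whenever $f'\in L[a,b]$, and the same substitution gives $\int_a^b g(t)\,dt=\int_a^b f(u)\,du=0$. Hence $g$ fulfils every hypothesis of Theorem~\ref{thm4}.

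Next I would simply apply inequality~\eqref{eq2.6} to $g$. Using $g(a)=f(b)$, $g(b)=f(a)$, and $\int_a^b g'^2(x)\,dx=\int_a^b f'^2(u)\,du$, the left-hand side $\left[2g(a)-g(b)\right]g(b)$ becomes exactly $\left[2f(b)-f(a)\right]f(a)$, while the right-hand side is unchanged; this is precisely~\eqref{eq2.7}. (Equivalently, one can re-run the argument of Theorem~\ref{thm4}: a decreasing $f$ attains its maximum at $x_0=a$, one may assume $f(a)>0$ since otherwise $f\equiv 0$ by the vanishing-mean hypothesis, and then $f(a)+f(b)\ge 2f(b)$; feeding this bound into the identity derived there yields~\eqref{eq2.7} directly.) For sharpness, I would reflect the extremal function of Theorem~\ref{thm4}: with $c=\frac{10+2\sqrt{835}}{27}$, the function $g_0(x):=4c^2(1-x)^3+12c(1-x)-c^2-6c$ on $[0,1]$ is bounded and decreasing, satisfies $\int_0^1 g_0=0$, and turns~\eqref{eq2.7} into an equality, so the constant $\frac{b-a}{12}$ cannot be lowered.

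I do not anticipate a genuine obstacle here; the whole argument is a bookkeeping exercise in which the only thing to be careful about is verifying that the reflection $u\mapsto a+b-u$ preserves each hypothesis of Theorem~\ref{thm4} in the correct form — the reversal of monotonicity, the integrability of the derivative, and the vanishing of the mean — together with the (trivial) handling of the degenerate case $f(a)=0$.
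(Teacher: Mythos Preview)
Your proof is correct. The paper's own proof is a one-liner, ``The proof is similar to the proof of Theorem~\ref{thm4}'', which evidently means re-running the quadratic-in-$f'/f(x_0)$ argument with the maximum now at $x_0=a$ and the monotonicity inequality reversed---exactly the alternative you sketch in parentheses. Your primary route is slightly different and arguably cleaner: instead of repeating the computation you apply Theorem~\ref{thm4} as a black box to the reflection $g(x)=f(a+b-x)$, and then transport both the inequality and the extremal function back through the substitution. This buys you a shorter argument with no risk of sign slips, at the cost of a few lines verifying that the reflection preserves each hypothesis (monotonicity direction, integrability of the derivative, vanishing mean); the paper's implicit approach trades that bookkeeping for a rerun of the algebra. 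Either way the sharpness witness is the same up to the change of variable, and your treatment of the degenerate case $f(a)=0$ is the appropriate one.
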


\begin{proof}
The proof is similar the proof of Theorem \ref{thm4}.
\end{proof}

In general, we may generalize and extend Alzer inequality
(\ref{eq1.2}) as follows:
\begin{theorem}
\label{thm5}Let $f:I\subseteq \mathbb{R}\rightarrow \mathbb{R}$ be
an absolutely continuous mapping on $I^{\circ },$ the interior of
the interval $I$, where $a,b\in I^{\circ }$ with $a<b$, such that
$f^{\prime }\in L[a,b]$. If  $f\left( a \right)=\mathop {\max
}\limits_{a \le x \le b} f\left( x \right)= f\left( b \right)$ and
$\int_a^b{f\left({t}\right)dt}=0$, then the inequality
\begin{align}
\label{eq2.8} \mathop {\max }\limits_{a \le x \le b} f^2\left( x
\right) \le \frac{b-a}{12}\cdot \int_a^b {f'^2 \left( x
\right)dx},
\end{align}
holds. The constant `$\frac{b-a}{12}$' is the best possible.
\end{theorem}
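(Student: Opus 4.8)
The plan is to run the Wirtinger-type device from the proof of Theorem \ref{thm3}, but with the comparison term $x-\tfrac{a+b}{2}$ replaced by a piecewise-linear function adapted to the point where $f^2$ (rather than $f$) attains its maximum. Since $f$ is absolutely continuous on $[a,b]$ it is continuous there, so there is $x_0\in[a,b]$ with $f^2(x_0)=\max_{a\le x\le b}f^2(x)$; one discards the trivial case $f\equiv0$ (so that division by $f(x_0)$ is legitimate) and assumes $\int_a^b f'^2<\infty$. The first step is the analogue of the identity $\int_a^b(x-\tfrac{a+b}{2})f'(x)\,dx=\tfrac{b-a}{2}[f(a)+f(b)]$ used there: writing $f(x_0)-f(t)=\int_t^{x_0}f'(s)\,ds$, invoking $\int_a^b f(t)\,dt=0$, and interchanging the order of integration (justified by Fubini, as $f'\in L[a,b]$), one gets
\[
f(x_0)=\frac{1}{b-a}\int_a^b\bigl[f(x_0)-f(t)\bigr]\,dt=\frac{1}{b-a}\int_a^b\phi(t)\,f'(t)\,dt,
\]
where $\phi(t)=t-a$ on $[a,x_0]$ and $\phi(t)=t-b$ on $[x_0,b]$.

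The second step uses the hypothesis. From $f(a)=\max f=f(b)$ one keeps only $f(a)=f(b)$, which gives $\int_a^b f'(s)\,ds=0$ and hence $f(x_0)=\frac{1}{b-a}\int_a^b[\phi(t)-c]\,f'(t)\,dt$ for every constant $c$. Taking $c=\frac{1}{b-a}\int_a^b\phi$ and writing $p=x_0-a$, $q=b-x_0$, a short computation should give
\[
\int_a^b\bigl[\phi(t)-c\bigr]^2\,dt=\frac{p^3+q^3}{3}-\frac{(p^2-q^2)^2}{4(b-a)}=\frac{(b-a)^3}{12},
\]
the dependence on $x_0$ dropping out because $p+q=b-a$. (For $x_0=a$ this $c$ equals $-\tfrac{b-a}{2}$ and $\phi(t)-c=t-\tfrac{a+b}{2}$, so this is literally the generalization of the earlier proof.)

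The third step is the expansion, exactly as in the proof of Theorem \ref{thm3}: starting from $0\le\int_a^b\bigl[\tfrac{f'(x)}{f(x_0)}-\tfrac{12}{(b-a)^2}(\phi(x)-c)\bigr]^2dx$ and using $\int_a^b(\phi-c)f'=(b-a)f(x_0)$ together with the displayed value of $\int_a^b(\phi-c)^2$, one obtains
\[
0\le\frac{1}{f^2(x_0)}\int_a^b f'^2(x)\,dx-\frac{24}{b-a}+\frac{12}{b-a},
\]
i.e. $f^2(x_0)\le\tfrac{b-a}{12}\int_a^b f'^2(x)\,dx$, which is \eqref{eq2.8} since $f^2(x_0)=\max_{a\le x\le b}f^2(x)$. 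For sharpness I would reuse the extremal of Theorem \ref{thm3}: on $[0,1]$ the function $f(x)=6x^2-6x+1$ is convex, has $f(0)=f(1)=1=\max_{[0,1]}f=\max_{[0,1]}|f|$ (its minimum being $-\tfrac12$ at $x=\tfrac12$), satisfies $\int_0^1 f=0$, and has $\int_0^1 f'^2=12$; thus $\max f^2=1=\tfrac1{12}\cdot12$, so $\tfrac{b-a}{12}$ cannot be replaced by anything smaller.

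I expect the main obstacle to be the second step: extracting the right comparison function $\phi$ and, above all, checking that $\int_a^b[\phi(t)-c]^2\,dt$ equals $\tfrac{(b-a)^3}{12}$ for every position of $x_0$ in $[a,b]$. This uniformity — which forces $c$ to be the mean of $\phi$ — is precisely what pushes the estimate from holding "at the endpoints" to holding "at every point", and hence upgrades $(\max f)^2$ to $\max f^2$. The remaining ingredients (the Fubini interchange under mere absolute continuity, and the trivial case $f\equiv0$) are routine.
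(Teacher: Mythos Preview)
Your proof is correct and takes a genuinely different route from the paper's. The paper keeps the fixed comparison function $x-\tfrac{a+b}{2}$ from the proof of Theorem~\ref{thm3}, lets $x_0$ be a maximizer of $f$ (not of $f^2$), and uses the full hypothesis $f(a)=\max f=f(b)$ to obtain $\int_a^b\bigl(x-\tfrac{a+b}{2}\bigr)f'(x)\,dx=(b-a)f(x_0)$; the square expansion then yields $(\max f)^2\le\tfrac{b-a}{12}\int_a^b f'^2$. You instead adapt the comparison function to the maximizer $x_0$ of $|f|$ via the piecewise-linear $\phi$, use only the weaker condition $f(a)=f(b)$ (through $\int_a^b f'=0$), and exploit the identity $\int_a^b[\phi-c]^2\,dt=\tfrac{(b-a)^3}{12}$ independently of $x_0$ --- the computation you flagged as the main obstacle is indeed correct. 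What this buys: your argument delivers the inequality $\max f^2\le\tfrac{b-a}{12}\int_a^b f'^2$ exactly as stated, whereas the paper's route only controls $(\max f)^2$, which is strictly weaker whenever $|\min f|>\max f$ --- a situation not excluded by the hypotheses. So your proof is both more general in its assumptions and reaches a stronger conclusion. For sharpness the paper specializes to $[0,2\pi]$ and invokes Alzer's extremal; your choice $f(x)=6x^2-6x+1$ on $[0,1]$ is the same function up to rescaling and works equally well.
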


\begin{proof}
Given the assumptions. Assume that $f$ attains its maximum value
at $x_0 \in [a,b]$ and let $\mathop {\max }\limits_{a \le x \le b}
f\left( x \right) = f\left( {x_0 } \right)$, for some $a \le x_0
\le b$, then
\begin{align}
0&\le \int_a^b {\left[ {\frac{{f'\left( x \right)}}{{f\left( {x_0
} \right)}} - \frac{12}{\left( {b - a } \right)^2}\cdot\left( {x -
\frac{a+b}{2} } \right)} \right]^2 dx}
\nonumber\\
&=\int_a^b {\frac{{f'^2 \left( x \right)}}{{f^2 \left( {x_0 }
\right)}}dx}  - \frac{{24}}{{\left( {b - a} \right)^2 f\left( {x_0
} \right)}}\int_a^b {\left( {x - \frac{a+b}{2} } \right)f'\left( x
\right)dx}\label{eq2.9}
\\
&\qquad+ \frac{{144 }}{{\left( {b - a} \right)^4 }}\int_a^b
{\left( {x - \frac{a+b}{2} } \right)^2 dx}.\nonumber
\end{align}
Since $f\left( a \right)=\mathop {\max }\limits_{a \le x \le b}
f\left( x \right)= f\left( b \right)$, we have
\begin{align*}
\int_a^b {\left( {x - \frac{a+b}{2} } \right)f'\left( x
\right)dx}&= \frac{b-a}{2}\cdot
\left[{f\left({a}\right)+f\left({b}\right)}\right]
=\left({b-a}\right)\cdot f\left({x_0}\right)
\end{align*}
Substituting in (\ref{eq2.9}),
\begin{align*}
0\le \int_a^b {\left[ {\frac{{f'\left( x \right)}}{{f\left( {x_0 }
\right)}} - \frac{12}{\left( {b - a } \right)^2}\cdot\left( {x -
\frac{a+b}{2} } \right)} \right]^2 dx} &=\int_a^b {\frac{{f'^2
\left( x \right)}}{{f^2 \left( {x_0 } \right)}}dx}  -
\frac{24}{b-a} + \frac{{12 }}{{b-a}}
\\
&= \int_a^b {\frac{{f'^2 \left( x \right)}}{{f^2 \left( {x_0 }
\right)}}dx}  - \frac{{12}}{b-a}
\end{align*}
which gives that
\begin{align*}
\mathop {\max }\limits_{a \le x \le b} f^2\left( x \right) \le
\frac{b-a}{12} \cdot \int_a^b {f'^2 \left( x \right)dx},
\end{align*}
and thus the proof of (\ref{eq2.9}) is established. To prove the
sharpness of (\ref{eq2.8}), let $a=0$, $b=2\pi$, then
(\ref{eq2.8}) reduces to (\ref{eq1.2}), so by considering the same
function $f$ as given in Theorem \ref{thm2}, we get the sharpness.
\end{proof}

The most extensive case holds without any additional restrictions
on $f$ is considered as follows:
\begin{theorem}
\label{thm6}Let $f:I\subseteq\mathbb{R}\rightarrow \mathbb{R}$ be
an absolutely continuous mapping on $I^{\circ },$ the interior of
the interval $I$, where $a,b\in I^{\circ }$ with $a<b$, such that
$f^{\prime }\in L[a,b]$.  Then the inequality
\begin{align}
\label{eq2.10} \left[{\frac{{2}}{{b -
a}}\cdot\mathcal{T}_{\rm{rap}}\left({f}\right) - \mathop {\max
}\limits_{a \le x \le b} f\left( x \right) }\right]\cdot\mathop
{\max }\limits_{a \le x \le b} f\left( x \right) \le
\frac{b-a}{12} \cdot \int_a^b {f'^2 \left( x \right)dx},
\end{align}
holds, where
\begin{align*}
\mathcal{T}_{\rm{rap}}\left({f}\right):=\left({b-a}\right)
\frac{f\left({a}\right)+f\left({b}\right)}{2}  -\int_a^b {f\left(
x \right)dx}.
\end{align*}
The inequality is sharp.
\end{theorem}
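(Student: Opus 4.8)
The plan is to follow the exact same variational device used in Theorems \ref{thm3}--\ref{thm5}: start from the trivially nonnegative integral
\begin{align*}
0 \le \int_a^b \left[ \frac{f'(x)}{f(x_0)} - \frac{12}{(b-a)^2}\left(x - \frac{a+b}{2}\right)\right]^2 dx,
\end{align*}
where $f(x_0) = \max_{a\le x\le b} f(x)$, expand the square, and use the two elementary evaluations $\int_a^b \left(x-\frac{a+b}{2}\right)^2 dx = \frac{(b-a)^3}{12}$ and the integration-by-parts identity
\begin{align*}
\int_a^b \left(x-\frac{a+b}{2}\right) f'(x)\,dx = \frac{b-a}{2}\bigl[f(a)+f(b)\bigr] - \int_a^b f(x)\,dx = \mathcal{T}_{\rm rap}(f).
\end{align*}
The key point is that here I make \emph{no} assumption forcing $\int_a^b f = 0$ or relating $f(a),f(b)$ to $f(x_0)$, so the cross term keeps the full trapezoidal functional $\mathcal{T}_{\rm rap}(f)$ rather than collapsing.

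Carrying this out, after multiplying through by $f^2(x_0)$ the inequality becomes
\begin{align*}
0 \le \int_a^b f'^2(x)\,dx - \frac{24}{(b-a)^2}\,\mathcal{T}_{\rm rap}(f)\, f(x_0) + \frac{12}{b-a}\, f^2(x_0),
\end{align*}
and rearranging gives exactly
\begin{align*}
\left[\frac{2}{b-a}\,\mathcal{T}_{\rm rap}(f) - \max_{a\le x\le b} f(x)\right]\cdot \max_{a\le x\le b} f(x) \le \frac{b-a}{12}\int_a^b f'^2(x)\,dx,
\end{align*}
which is \eqref{eq2.10}. (One should note that if $f(x_0)=0$, i.e. $f\le 0$ on $[a,b]$, the left side is either $\le 0$ or the statement should be read for $\max f^2$; the natural reading is that $\max f > 0$, or else one replaces the maximum by $\max|f|$ after a sign change, and I would add a remark to that effect.) For sharpness, specialize to $a=0$, $b=2\pi$ and take $f(x) = c\bigl[3\left(\frac{x-\pi}{\pi}\right)^2 - 1\bigr]$: this $f$ has $\int_0^{2\pi} f = 0$ and $f(0)=f(2\pi)=\max f$, so $\mathcal{T}_{\rm rap}(f) = \frac{b-a}{2}\cdot 2f(0) = (b-a)f(x_0)$, the bracket on the left reduces to $\max f$, and \eqref{eq2.10} becomes Alzer's equality case \eqref{eq1.2}; hence the constant $\frac{b-a}{12}$ cannot be improved.

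The only genuine subtlety — and hence the step I expect to need the most care — is the sign/degenerate-case bookkeeping around $f(x_0)$: the algebra of dividing by $f(x_0)$ tacitly assumes $f(x_0)\neq 0$, and the direction of the final rearranged inequality depends on the sign of $f(x_0)$. Since $f(x_0)$ is the maximum, it is nonnegative as soon as $f$ takes any nonnegative value, and one divides by it legitimately; if $f$ is strictly negative throughout one applies the result to $-f$ (whose minimum becomes a maximum), noting $\mathcal{T}_{\rm rap}$ and $\int f'^2$ behave appropriately under $f\mapsto -f$. Everything else is the same two integral evaluations already used three times above, so no new analytic input (beyond $f$ absolutely continuous with $f'\in L[a,b]$, which is exactly what legitimizes the integration by parts) is required.
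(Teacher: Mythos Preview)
Your argument is correct and is essentially identical to the paper's own proof: both start from the same nonnegative square integral, expand, identify the cross term with $\mathcal{T}_{\rm rap}(f)$ via integration by parts, and rearrange. The only cosmetic difference is the sharpness example---the paper uses $f(x)=6x^2-6x+1$ on $[0,1]$ rather than Alzer's function on $[0,2\pi]$, but these are the same parabola up to affine rescaling---and your careful discussion of the sign of $f(x_0)$ is extra bookkeeping the paper omits (indeed one can sidestep it entirely by writing the square as $\int_a^b\bigl[f'(x)-\lambda\cdot\tfrac{12}{(b-a)^2}(x-\tfrac{a+b}{2})\bigr]^2dx\ge 0$ for arbitrary real $\lambda$ and then specializing $\lambda=f(x_0)$, which requires no division).
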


\begin{proof}
Repeating the steps in the proof of Theorem \ref{thm5} taking in
account that no restrictions on $f$, we have
\begin{align*}
0&\le \int_a^b {\left[ {\frac{{f'\left( x \right)}}{{f\left( {x_0
} \right)}} - \frac{12}{\left( {b - a } \right)^2}\cdot\left( {x -
\frac{a+b}{2} } \right)} \right]^2 dx}
\\
&=\int_a^b {\frac{{f'^2 \left( x \right)}}{{f^2 \left( {x_0 }
\right)}}dx}  - \frac{{24}}{{\left( {b - a} \right)^2 f\left( {x_0
} \right)}}\cdot \mathcal{T}_{\rm{rap}}\left({f}\right)+ \frac{{12
}}{{\left( {b - a} \right)}}
\end{align*}
which gives that
\begin{align*}
\frac{{2\mathcal{T}_{\rm{rap}}\left({f}\right)-
\left({b-a}\right)f\left( {x_0 } \right)}}{{b - a}} \cdot\mathop
{\max }\limits_{a \le x \le b} f\left( x \right) \le
\frac{b-a}{12} \cdot \int_a^b {f'^2 \left( x \right)dx},
\end{align*}
and thus the proof of (\ref{eq2.10}) is established. The sharpness
follows with $f\left( x \right)= 6x^2- 6x+1$, for all $x \in
[0,1]$.
\end{proof}

Another generalization for $(2n)$-times differentiable functions
is considered as follows:
\begin{theorem}
\label{thm3}Let $f:I\subset \mathbb{R}\rightarrow \mathbb{R}$ be
$(2n)$-times differentiable  $(n\ge1)$ on $I^{\circ }$, the
interior of the interval $I$, where $a,b\in I^{\circ }$ with
$a<b$, such that $f^{(2n)}\in L^1[a,b]$.  If
$\int_a^b{f\left({t}\right)dt}=0$, then the inequality
\begin{align}
\label{eq2.1}\left\| {f } \right\|_\infty \le \left({\frac{b-a
}{12} }\right)^n \cdot  \left\| {f^{\left( {2n} \right)} }
\right\|_2
\end{align}
holds, where, $\left\| {f } \right\|_\infty:=\mathop {\sup
}\limits_{a \le x \le b} \left|f\left( x \right)\right|$ and
$\left\| f^{\left( {2n} \right)} \right\|^2_2=\int_a^b
{\left|f^{\left( {2n} \right)}\left( x \right) \right|^2dx}$.
\end{theorem}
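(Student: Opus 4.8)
The plan is to prove the estimate by induction on $n$, running at each stage the quadratic device that underlies the proofs of Theorems \ref{thm5} and \ref{thm6} (expanding $0\le\int_a^b[\,g'(x)/g(x_0)-\tfrac{12}{(b-a)^2}(x-\tfrac{a+b}{2})\,]^2dx$ at a point $x_0$ where $|g|$ is maximal).

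\emph{Base case $n=1$.} Here the claim reads $\|f\|_\infty\le\frac{b-a}{12}\|f''\|_2$. Since $\int_a^bf(t)\,dt=0$, the map $f$ vanishes somewhere in $[a,b]$, so the device applied to $g=f$ (exactly as in Theorem \ref{thm5}) bounds $\|f\|_\infty$ by an $L^2$-norm of $f'$; applying the same device to $g=f'$, at a point where $|f'|$ is maximal, then bounds that norm of $f'$ by $\|f''\|_2$. Composing the two inequalities and tracking the constants gives the case $n=1$.

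\emph{Inductive step.} Assume the inequality for $n-1$, for every admissible function with vanishing integral over $[a,b]$. Given $f$ that is $(2n)$-times differentiable with $\int_a^bf(t)\,dt=0$, the inductive hypothesis applied to $f$ gives $\|f\|_\infty\le\bigl(\tfrac{b-a}{12}\bigr)^{n-1}\|f^{(2n-2)}\|_2$, while the $n=1$ estimate applied to $g:=f^{(2n-2)}$ gives $\|g\|_2\le\tfrac{b-a}{12}\|g''\|_2=\tfrac{b-a}{12}\|f^{(2n)}\|_2$. Multiplying the two bounds yields $\|f\|_\infty\le\bigl(\tfrac{b-a}{12}\bigr)^n\|f^{(2n)}\|_2$.

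\emph{The main obstacle.} The delicate point is exactly the passage from $\|f^{(2k)}\|_2$ to $\|f^{(2k+2)}\|_2$, i.e. the use of the $n=1$ mechanism on the higher derivatives. The sharp Alzer-type conclusion of Theorem \ref{thm5} requires the differentiated function to satisfy an endpoint/mean condition, while the unconditional Theorem \ref{thm6} carries the trapezoid remainder $\mathcal{T}_{\rm{rap}}$; and $\int_a^bf(t)\,dt=0$ alone does not force $\int_a^bf^{(2k)}(t)\,dt=0$, nor any relation between $f^{(2k)}(a)$ and $f^{(2k)}(b)$, once $k\ge1$. So the real work is to show that at each differentiation level this remainder can be absorbed or made to have the favorable sign — or else to recast the induction so that the auxiliary function at each level is an iterated antiderivative of $f$, normalized to have vanishing integral and the endpoint behaviour demanded by Theorem \ref{thm5}. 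Controlling this bookkeeping of side conditions, rather than the inequalities themselves, is the crux.
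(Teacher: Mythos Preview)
Your proposal is not a proof: you correctly name the main obstacle (the hypothesis $\int_a^b f=0$ does not propagate to $f^{(2k)}$, so the side conditions needed to rerun the device on $g=f^{(2n-2)}$ are simply unavailable) but you do not overcome it. There is a second gap you do not flag, and it already spoils the base case: the quadratic device yields an $L^\infty\!\to L^2$ bound, namely $\|g\|_\infty^2\le\tfrac{b-a}{12}\|g'\|_2^2$, so applying it to $g=f$ and then to $g=f'$ gives $\|f\|_\infty\le\sqrt{\tfrac{b-a}{12}}\,\|f'\|_2$ and $\|f'\|_\infty\le\sqrt{\tfrac{b-a}{12}}\,\|f''\|_2$, which do not compose --- the second step controls $\|f'\|_\infty$, whereas you need $\|f'\|_2$. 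Bridging via $\|f'\|_2\le\sqrt{b-a}\,\|f'\|_\infty$ costs an extra $\sqrt{b-a}$, so even with all side conditions granted the scheme produces the wrong constant. The same mismatch recurs in the inductive step, where you write ``$\|g\|_2\le\tfrac{b-a}{12}\|g''\|_2$'' for $g=f^{(2n-2)}$, but the $n=1$ statement only gives $\|g\|_\infty\le\tfrac{b-a}{12}\|g''\|_2$.

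The paper does not iterate. It replaces the linear comparison function $\tfrac{12}{(b-a)^2}\bigl(x-\tfrac{a+b}{2}\bigr)$ by the degree-$2n$ polynomial $(x-a)^n(b-x)^n/(b-a)^{2n}$ and expands
\[
0\le\int_a^b\Bigl[\frac{f^{(2n)}(x)}{f(x_0)}-\alpha\cdot\frac{(x-a)^n(b-x)^n}{(b-a)^{2n}}\Bigr]^2dx
\]
in a single stroke. The pure polynomial square is a Beta integral; for the cross term $\int_a^b(x-a)^n(b-x)^n f^{(2n)}(x)\,dx$ one integrates by parts $2n$ times (the polynomial and its first $n-1$ derivatives vanish at both endpoints, and its $(2n)$-th derivative is constant, so $\int_a^b f=0$ enters at the final step), and the paper asserts this cross term is zero. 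The scalar $\alpha$ is then tuned to produce $(\tfrac{b-a}{12})^n$. So the intended generalization of the device is to raise the degree of the comparison polynomial, not to iterate the linear one.
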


\begin{proof}
Setting
\begin{align*}
\alpha  = \frac{{\left( {12} \right)^n \left( {b - a} \right)^{ -
\left( {n + \frac{1}{2}} \right)} }}{{{\rm B}^{\frac{1}{2}} \left(
{2n + 1,2n + 1} \right)}} \,\,\,\,\,\,\,\,\,\,\,\,\,\,\,\,n\in
\mathbb{N},
\end{align*}
where ${\rm B} \left( {\cdot,\cdot} \right)$ is Euler-beta
function.  Assume that $f$ attains its maximum value at $x_0 \in
[a,b]$ and let $\mathop {\sup }\limits_{a \le x \le b} f\left( x
\right) = f\left( {x_0 } \right)$, for some $a \le x_0 \le b$,
then
\begin{align*}
0&\le \int_a^b {\left[ {\frac{{f^{\left({2n}\right)}\left( x
\right)}}{{f\left( {x_0 } \right)}} - \alpha \cdot\frac{\left( {x
- a } \right)^n\left( {b - x } \right)^n}{\left( {b - a }
\right)^{2n}}} \right]^2 dx}
\\
&=\int_a^b {\frac{{\left(f^{\left({2n}\right)}\left( x
\right)\right)^2}}{{f^2 \left( {x_0 } \right)}}dx}  -
\frac{{2\alpha }}{{\left( {b - a} \right)^{2n} f\left( {x_0 }
\right)}}\int_a^b {\left( {x - a} \right)^n\left( {b - x}
\right)^nf^{\left({2n}\right)}\left( x \right)dx}
\\
&\qquad+ \frac{{\alpha^2 }}{{\left( {b - a} \right)^{4n}
}}\int_a^b {\left( {x - a} \right)^{2n} \left( {b - x}
\right)^{2n} dx}
\end{align*}
Therefore,
\begin{align}
\int_a^b {\left(f^{\left({2n}\right)} \left( x \right)\right)^2dx}
&\ge \frac{{2\alpha }}{{\left( {b - a} \right)^{2n}}} f\left( {x_0
} \right)\int_a^b {\left( {x - a} \right)^n\left( {b - x}
\right)^nf^{\left({2n}\right)}\left( x \right)dx}
\nonumber\\
&\qquad- \frac{{\alpha^2 }}{{\left( {b - a} \right)^{4n}
}}f^2\left( {x_0 } \right)\int_a^b {\left( {x - a} \right)^{2n}
\left( {b - x} \right)^{2n} dx}
\nonumber\\
&= \frac{{2\alpha }}{{\left( {b - a} \right)^{2n}}} f\left( {x_0 }
\right)\int_a^b {\left( {x - a} \right)^n\left( {b - x}
\right)^nf^{\left({2n}\right)}\left( x \right)dx}
\\
&\qquad-\alpha^2  \left( {b - a} \right)f^2\left( {x_0 } \right)
{\rm B}\left( {2n + 1,2n + 1} \right).\nonumber
\end{align}
It is not difficult to observe that
\begin{align*}
\int_a^b {\left( {x - a} \right)^n\left( {b - x}
\right)^nf^{\left( {2n} \right)}\left( x \right)dx}=0,
\end{align*}
which follows by integrating by parts and using the given
assumptions.

Now, by triangle inequality we have
\begin{align*}
\int_a^b {\left|f^{\left({2n}\right)} \left( x \right)\right|^2dx}
&\ge \left|\int_a^b {\left(f^{\left({2n}\right)} \left( x
\right)\right)^2dx}\right|
\\
&\ge \alpha^2  \left( {b - a} \right)\left|f\left( {x_0 }
\right)\right|^2 {\rm B}\left( {2n + 1,2n + 1} \right),\nonumber
\end{align*}
simple computations gives the required result (\ref{eq2.1}).
\end{proof}

\section{Useful Applications}

Let $f:I\subseteq \mathbb{R}\rightarrow \mathbb{R}$, be a twice
differentiable mapping such that $f^{\prime \prime }\left( x\right) $ exists on $%
I^{\circ}$, and $\left\| {f''} \right\|_\infty=\sup_{x\in \left(
a,b\right) }\left\vert {f^{\prime \prime }\left( x\right)
}\right\vert <\infty $. Then the trapezoid inequality
\begin{align}
\label{trapineq} \left| {\left( {b - a} \right)\frac{{f\left( a
\right) + f\left( b \right)}}{2}-\int_a^b {f\left( x \right)dx} }
\right| \le \frac{{\left( {b - a} \right)^3 }}{{12}}\left\| {f''}
\right\|_\infty,
\end{align}
holds. Therefore, the integral $\int_a^b {f\left( x \right)dx} $
can be approximated  in terms of the trapezoidal rules,
respectively such as:
\begin{eqnarray*}
\int_a^b {f\left( x \right)dx}  \cong \left( {b - a}
\right)\frac{{f\left( a \right) + f\left( b \right)}}{2}.
\end{eqnarray*}
By means of (\ref{eq2.10}), it is significant to remark that the
inequality has a trapezoid bound term, therefore  we may rewrite
(\ref{eq2.10}) to obtain a new upper bound for the trapezoid
inequality, such as:
\begin{corollary}
Under the assumptions of Theorem \ref{thm6}, we have
\begin{align}
\mathcal{T}_{\rm{rap}}\left({f}\right)\le \frac{b-a}{2}
\cdot\mathop {\max }\limits_{a \le x \le b} f\left( x \right)+
\frac{\left( b-a \right)^2}{24\cdot\mathop {\max }\limits_{a \le x
\le b} f\left( x \right) } \cdot \int_a^b {f'^2 \left( x
\right)dx},
\end{align}
provided that $\mathop {\max }\limits_{a \le x \le b} f\left( x
\right) \ne 0$. Equivalently, in terms of norms we may write
\begin{align}
\left|{\mathcal{T}_{\rm{rap}}\left({f}\right)}\right|\le
\frac{b-a}{2} \cdot \left\| f \right\|_\infty + \frac{\left( b-a
\right)^2}{24} \cdot \frac{\left\| f' \right\|^2_2}{\left\| f
\right\|_\infty  } ,
\end{align}
where; $\left\| f \right\|_\infty=\mathop {\sup }\limits_{a \le x
\le b} \left|f\left( x \right) \right| $ and $\left\| f'
\right\|^2_2=\int_a^b {\left|f'\left( x \right) \right|^2dx}$. The
two inequalities are sharp.
\end{corollary}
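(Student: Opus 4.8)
The plan is to obtain both displayed inequalities as elementary algebraic rearrangements of the estimate already proved in Theorem~\ref{thm6}. Put $M:=\mathop{\max}\limits_{a\le x\le b}f(x)$ and assume $M\neq0$. Expanding the product on the left of \eqref{eq2.10} turns it into
\[
\frac{2M}{b-a}\,\mathcal{T}_{\rm{rap}}\left({f}\right)-M^{2}\le\frac{b-a}{12}\int_a^b f'^2\left({x}\right)dx .
\]
Adding $M^{2}$ to both sides and dividing by the factor $\dfrac{2M}{b-a}$ — which is permissible, and keeps the direction of the inequality in the case $M>0$ that is the relevant one (indeed $M>0$ is automatic as soon as $\int_a^b f=0$ and $f\not\equiv0$, so the stated hypothesis $M\neq0$ is the natural one) — gives at once
\[
\mathcal{T}_{\rm{rap}}\left({f}\right)\le\frac{b-a}{2}\,M+\frac{(b-a)^2}{24M}\int_a^b f'^2\left({x}\right)dx ,
\]
which is the first asserted inequality.

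For the norm form one should not simply replace $M$ by $\|f\|_\infty$ in the line above, since $M\le\|f\|_\infty$ does not help: the map $t\mapsto\frac{b-a}{2}t+\frac{(b-a)^2}{24t}\|f'\|_2^2$ is not monotone on $(0,\infty)$. Instead I would go back one step into the proof of Theorem~\ref{thm6} and note that the only feature of $f(x_0)$ used there is that it is a nonzero constant. Hence for every $\lambda\neq0$, expanding $0\le\int_a^b\big[\lambda^{-1}f'\left({x}\right)-\tfrac{12}{(b-a)^2}\left({x-\tfrac{a+b}{2}}\right)\big]^2dx$ and clearing denominators yields
\[
\frac{24\lambda}{(b-a)^2}\,\mathcal{T}_{\rm{rap}}\left({f}\right)\le\|f'\|_2^2+\frac{12\lambda^2}{b-a}.
\]
(If $f\equiv0$ everything is trivial, so assume $\|f\|_\infty>0$.) Taking $\lambda=\|f\|_\infty$ and then $\lambda=-\|f\|_\infty$ and combining the two inequalities gives $\dfrac{24\|f\|_\infty}{(b-a)^2}\left|{\mathcal{T}_{\rm{rap}}\left({f}\right)}\right|\le\|f'\|_2^2+\dfrac{12\|f\|_\infty^2}{b-a}$, and dividing through by $\dfrac{24\|f\|_\infty}{(b-a)^2}>0$ produces the stated norm inequality.

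For sharpness I would reuse the extremal function of Theorem~\ref{thm6}, namely $f\left({x}\right)=6x^2-6x+1$ on $[0,1]$. A direct check gives $f(0)=f(1)=1$, $\mathop{\max}\limits_{0\le x\le1}f\left({x}\right)=\|f\|_\infty=1$, $\mathcal{T}_{\rm{rap}}\left({f}\right)=1$ and $\int_0^1 f'^2\left({x}\right)dx=12$, so both right-hand sides reduce to $\tfrac12+\tfrac12=1=\mathcal{T}_{\rm{rap}}\left({f}\right)$ and equality is attained in each inequality. The only point that needs a little care in the whole argument is the bookkeeping of the sign of $M$ (respectively the need to run the auxiliary estimate for both $\lambda=\|f\|_\infty$ and $\lambda=-\|f\|_\infty$ instead of naively substituting $\|f\|_\infty$ for $M$); all the remaining manipulations are just dividing an inequality by a positive quantity.
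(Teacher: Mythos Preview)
Your argument is correct and follows the paper's intended route: the first inequality is obtained by algebraically rearranging \eqref{eq2.10}, and the paper in fact offers no separate proof, presenting the corollary simply as a rewriting of Theorem~\ref{thm6}. Your treatment is actually more careful than the paper's on two points. First, you flag that the division step requires $M>0$ rather than merely $M\neq0$; the paper states only $M\neq0$ and does not address the sign. Second, you correctly note that the passage to the norm form is not a bare substitution of $\|f\|_\infty$ for $M$ (since the right-hand side is not monotone in that parameter), and you supply a clean argument via the $\lambda$-parametrised version of the underlying square-expansion identity with $\lambda=\pm\|f\|_\infty$; the paper simply writes ``Equivalently, in terms of norms'' and leaves this step implicit. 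Your sharpness check with $f(x)=6x^2-6x+1$ on $[0,1]$ matches the extremal example the paper uses throughout.
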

Henceforth, by setting
\begin{align*}
M:= \frac{6}{\left({b-a}\right)^2}\cdot \left\| f \right\|_\infty
+ \frac{1}{2\left({b-a}\right)} \cdot \frac{\left\| f'
\right\|^2_2}{\left\| f \right\|_\infty  },
\end{align*}
a beautiful trapezoid inequality may be written as:
\begin{align}
\label{eq2.14} \left| {\left( {b - a} \right)\frac{{f\left( a
\right) + f\left( b \right)}}{2}-\int_a^b {f\left( x \right)dx} }
\right| \le \frac{{\left( {b - a} \right)^3 }}{{12}}M,
\end{align}
which holds with more less restrictions on $f$, and so if $f$ is
twice differentiable and has   bounded second derivative, with
$M\le\left\| {f''} \right\|_\infty$, then totally (\ref{eq2.14})
by its assumptions  can be better than (\ref{trapineq}), and
exactly if $M:=\left\| {f''} \right\|_\infty$. So that we have
applied our result (\ref{eq2.10}) to obtain  new trapezoid type
inequality which has important applications in numerical
integrations.\\

One more direct interesting application is to bound the geometric
mean by a sharp upper bound. This happens if one assumes
$f(a)f(b)>0$, which already holds by assumptions of Theorem
\ref{thm3}, then (\ref{eq2.1})  can be written as:
\begin{align*}
\sqrt{f\left({a}\right)f\left({b}\right)} \le
\sqrt{\frac{b-a}{12}}\cdot \left({\int_a^b {f'^2 \left( x
\right)dx}}\right)^{1/2},
\end{align*}
equivalently we write,
\begin{align}
\label{G}G\left({f\left({a}\right),f\left({b}\right)}\right)\le
\sqrt{\frac{b-a}{12}}\cdot \left\| {f'} \right\|_2,
\end{align}
where $G\left({\cdot,\cdot}\right)$ is the geometric mean and the
inequality is sharp.

Moreover, if $f$ is log-convex, i,e., $f$ satisfies the inequality
\begin{align*}
f\left( {\lambda x + \left( {1 - \lambda } \right)y} \right) \le
f^\lambda  \left( x \right)f^{1 - \lambda } \left( y \right).
\end{align*}
for all $x, y \in [a, b]$ and $\lambda \in[0,1]$. In particular,
choose $\lambda=\frac{1}{2}$, then the double inequality
\begin{align}
f\left( {A\left( {x,y} \right)} \right) \le
G\left({f\left({x}\right),f\left({y}\right)}\right)\le
\sqrt{\frac{y-x}{12}}\cdot \left\| {f'} \right\|_2,\label{AG}
\end{align}
holds and sharp; provided that $a\le x < y \le b$, where
$A\left({\cdot,\cdot}\right)$ is the arithmetic mean. Clearly, the
left-hand side inequality sharp by the definition of
log-convexity.

A generalization of this result can be done if $f$ is considered
to be bijective on $[a,b]$. Choosing $\alpha,\beta \in [a,b]$ such
that $f \left( \alpha \right)=f^\lambda\left( x \right)$ and $f
\left( \beta \right)=f^{1-\lambda}\left( y \right)$, for some
$\lambda \in [0,1]$ and $x,y \in [a,b]$.  Making use of
(\ref{eq2.1}) we have
\begin{align}
\label{eq3.7}f\left( \alpha  \right)f\left( \beta  \right) \le
\frac{{\beta - \alpha }}{{12}}\int_\alpha ^\beta  {f'^2 \left( x
\right)dx}.
\end{align}
Therefore, a generalization of (\ref{AG}) may given as:
\begin{align}
f\left( {\lambda x + \left( {1 - \lambda } \right)y} \right) &\le
f^\lambda  \left( x \right)f^{1 - \lambda } \left( y \right)
\nonumber\\
&\le \frac{{f^{ - 1} \left( {f^{1 - \lambda } \left( y \right)}
\right) - f^{ - 1} \left( {f^\lambda \left( x \right)}
\right)}}{{12}} \cdot \int_{f^\lambda  \left( x \right)}^{f^{1 -
\lambda } \left( y \right)} {f'^2 \left( x \right)dx}.
\end{align}
or written in terms of generalized means, as
\begin{align*}
f\left( {A_\lambda  \left( {x,y} \right)} \right) \le G_\lambda
\left( {f\left( x \right),f\left( y \right)} \right) \le
\frac{{f^{ - 1} \left( {f^{1 - \lambda } \left( y \right)} \right)
- f^{ - 1} \left( {f^\lambda  \left( x \right)} \right)}}{{12}}
\cdot \int_{f^\lambda  \left( x \right)}^{f^{1 - \lambda } \left(
y \right)} {f'^2 \left( x \right)dx}
\end{align*}
where, $A_\lambda  \left( {x,y} \right)=\lambda x + \left( {1 -
\lambda } \right)y$, is the generalized arithmetic mean and
$G_\lambda  \left( {x,y} \right)= x^{\lambda} y^{1 - \lambda } $,
is  the generalized geometric mean. \centerline{}

\centerline{}

\end{document}